\newtheorem{theorem}{Theorem}
\newtheorem{cor}{Corollary}
\newtheorem{definition}{Definição}[section]
\DeclareMathOperator{\Iso}{Iso}
\newcommand{\bpr}{\begin{proposition}}
\newcommand{\epr}{\end{proposition}}
\newcommand{\bco}{\begin{corollary}}
\newcommand{\eco}{\end{corollary}}
\newcommand{\blm}{\begin{lemma}}
\newcommand{\elm}{\end{lemma}}
\newcommand{\bdf}{\begin{definition}}
\newcommand{\edf}{\end{definition}}
\newcommand{\bpm}{\begin{pmatrix}}
\newcommand{\epm}{\end{pmatrix}}
\newcommand{\beq}{\begin{equation}}
\newcommand{\eeq}{\end{equation}}
\newcommand{\brm}{\begin{remark}}
\newcommand{\erm}{\end{remark}}
\newcommand{\bcj}{\begin{conjecture}}
\newcommand{\ecj}{\end{conjecture}}
\newcommand{\bqu}{\begin{question}}
\newcommand{\equ}{\end{question}}
\newcommand{\bex}{\begin{exercise}}
\newcommand{\eex}{\end{excercise}}
\newcommand{\bit}{\begin{itemize}}
\newcommand{\eit}{\end{itemize}}
\newcommand{\ben}{\begin{enumerate}}
\newcommand{\een}{\end{enumerate}}
\newcommand\Quotient[2]{
        \mathchoice
            {
                \text{\raise1ex\hbox{\thinspace $#1$}\Big{/} \lower1ex\hbox{$#2$} \thinspace}%
            }
            {
                #1\,/\,#2
            }
            {
                #1\,/\,#2
            }
            {
                #1\,/\,#2
            }
    }
\newcommand\GIT[2]{
        \mathchoice
            {
                \text{\raise1ex\hbox{\thinspace $#1$}\Big{/}\!\!\!\!\Big{/} \lower1ex\hbox{$#2$} \thinspace}%
            }
            {
                #1\,/\,#2
            }
            {
                #1\,/\,#2
            }
            {
                #1\,/\,#2
            }
    }
\begin{document}

\thispagestyle{empty}

\title[Fundamental group of a compact manifold]{A remark on the fundamental group of a compact negatively curved manifold}

\author[Alcides de Carvalho Júnior]{Alcides de Carvalho Júnior $^{\dag}$} 
\thanks{$^{\dag}$ The author was supported by CNPq-Brazil.}

\maketitle


\thispagestyle{empty}

\begin{abstract}
\bigskip
In this short note we survey some results about the fundamental group of a compact negatively curved manifold. In particular, we review a theorem of Gusevskij, see \cite{zbMATH03907299}, it states that the fundamental group of a compact negatively curved manifold does not belong to $\mathcal{C},$  where $\mathcal{C}$ is the smallest class of groups that contains all amenable groups and is closed under free products and finite extensions. The class $\mathcal{C}$ is quite natural and was introduced for the first time in \cite{Thurston}.
\end{abstract}

\renewcommand{\thefootnote}{\fnsymbol{footnote}} 
\footnotetext{\emph{2010 Mathematics Subject Classification.} Primary 53C20, 53C35.}     
\renewcommand{\thefootnote}{\arabic{footnote}}

\renewcommand{\thefootnote}{\fnsymbol{footnote}} 
\footnotetext{\emph{Keywords.} Negative sectional curvature, fundamental group.}     
\renewcommand{\thefootnote}{\arabic{footnote}}

\section{Introduction}
\bigskip

Yau and Schoen \cite{MR1333601} proposed to characterize the groups that appear as the fundamental group of some compact manifold with negative sectional curvature.
The fundamental group $\pi_1(M)$ of a compact negatively curved manifold has been well studied over the years, for example, Milnor \cite{Milnor} showed that one such group must have exponential growth, Preissmann \cite{Preissmann} and Beyers \cite{Byers} asserted that any of its abelian or solvable subgroup must be infinite cyclic. By the Hadamard theorem one knows that the manifold is a $K(\pi_1(M),1)$ space which imposes certain conditions on the group, e.g., the group must be torsion free. Eberlein \cite{MR0400250} also showed that these groups contain a nontrivial free subgroup. But not everything is known, for example, there exist a conjecture that, for a compact negatively curved $M$ of even dimension $n$, the Euler characteristic $\chi(M)$ is negative for $n \equiv 2\mod  4$ and $\chi(M)>0$ for $n \equiv 0\mod  4.$ 

Hirsch and Thurston \cite{Thurston} observed that if this conjecture is true, then the fundamental group of a compact negatively curved manifold does not belongs to the class $\mathcal{C}$, where $\mathcal{C}$ denotes the smallest class of groups which contains all amenable groups and which satisfies: if $G$ and $H$ are two groups in $\mathcal{C}$ then their free product, $G\ast H$, is also in $\mathcal{C}$, and if a group $G \in \mathcal{C}$ has finite index in $K$ then $K$ also belongs to $\mathcal{C}$. In particular, $G\in \mathcal{C}$ if $G$ is solvable, free or of subexponential growth. The authors also remarked that this can be proved directly.
Based on this, Chen proved in \cite{Chen} that if $\pi_1(M) \in \mathcal{C}$ then $\pi_1(M)$ must be a free group. Thereof, Gusevskij \cite{zbMATH03907299} proves that $\pi_1(M)$ can never belong to $\mathcal{C}$. In these notes we give a direct proof that the fundamental group of a compact negatively curved manifold never belongs to $\mathcal{C},$ by observing that $\pi_1(M)$ has no finite index free subgroups.

By Preissmman's Theorem we know that for a compact negatively curved $M,$ $\pi_1(M)$ does not belong to the class of all abelian groups, see \cite[Theorem 10]{Preissmann}. Beyers \cite{Byers} ensures that $\pi_1(M)$ does not belong to the class of all solvable groups, while by Avez's Theorem \ref{Avez}, $\pi_1(M)$ does not belong the class of all amenable groups. Note that, the class of all abelian groups is a proper class of all solvable groups and the class of all solvable groups is a proper class of all amenable groups, and that class $\mathcal{C}$ contains the class of all amenable groups, thus $\pi_1(M)\notin \mathcal{C}$ can be interpreted as a natural generalization of previous results. 

\section{Preliminaries}
In this section, we introduce some notions on geometric group theory, which arise when one looks at groups as metric spaces. 
The key notion in this regard is that of a quasi-isometry, namely, an equivalence relation among metric spaces that equates spaces which look the same on the large scale \ref{qi}. 
\begin{definition}\label{qi}
Let $X,Y$ be metric spaces. An aplication $f : X \to Y$ is an $(L,C)$-quasi-isometric embedding if $$L^{-1}d_X(x,x')- C\leq d_Y(f(x),f(x')) \leq Ld_X(x,x')+ C$$  for all $x, x' \in X.$
\end{definition}
An ($L,C)$-quasi-isometric embedding is called an $(L,C)$-quasi-isometry, if it
admits a quasi-inverse map $\widehat{f} : Y \to X$ which is also an $(L,C)$-quasi-isometric
embedding, that is, an $(L,C)$-quasi-isometry satisfying:
$$ d_X(  \widehat{f}f(x), x) \leq C,\quad d_Y (f \widehat{f}(y), y) \leq C$$
for all $x \in X, y \in Y.$
\begin{definition}
Two metric spaces $X, Y$ are {\it quasi-isometric} if there exists a quasi-isometry
$X \to Y$ .
\end{definition}
The main example of quasi-isometry, which partly justifies the interest in such
maps, is given by the following result which says that the volume growth in universal covers of compact Riemannian manifolds and growth of their fundamental groups increase at the same rate, it was proved first by A. Schwarz and, 13 years later, by Milnor \cite{Milnor}, in other words growth is quasi-isometric invariant.
It is well known that hyperbolicity in the sense of Gromov, as well as the number of ends, are both quasi-isometric invariants.

Given $M$ a compact, connected, Riemannian manifold, let $\tilde{M}$
be its universal covering endowed with the pull-back Riemannian metric, then the
fundamental group $\pi_1(M)$ acts isometrically on $\tilde{M}.$
\begin{theorem}[Milnor-Schwarz]
Let $M$ be a compact Riemannian manifold. Then the group $\pi_1(M)$ is finitely generated, and the metric space $\tilde{M}$ is quasi-isometric
to $\pi_1(M)$ with some word metric.
\end{theorem}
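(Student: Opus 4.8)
The plan is to derive the statement from the classical \v{S}varc--Milnor lemma, and I will sketch its proof in the present situation. First I would assemble the geometric data. Endow $\tilde{M}$ with the pull-back of the Riemannian metric on $M$; since $M$ is compact this metric is complete, so by the Hopf--Rinow theorem $\tilde{M}$ is a \emph{proper geodesic} metric space: closed balls are compact, and any two points are joined by a minimizing geodesic. The group $\Gamma := \pi_1(M)$ acts on $\tilde{M}$ by deck transformations, which are isometries for this metric; this action is properly discontinuous, and because the quotient $\tilde{M}/\Gamma$ is $M$, which is compact, the action is cocompact.

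Next, fix a basepoint $p \in \tilde{M}$. Since the projection $\tilde{M} \to M$ is open and $M$ is compact, there is $R>0$ with $\Gamma \cdot \bar B(p,R) = \tilde{M}$. I would set
\[ S = \{\, \gamma \in \Gamma \setminus \{e\} \ :\ \gamma\,\bar B(p,3R) \cap \bar B(p,3R) \neq \emptyset \,\}. \]
Then $S$ is finite: the ball $\bar B(p,3R)$ is compact by properness, and a properly discontinuous isometric action leaves only finitely many group elements carrying a fixed compact set back to one meeting it. The crux is to show that $S$ generates $\Gamma$ and controls the word metric $d_S$ up to the metric of $\tilde{M}$. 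Given $\gamma \in \Gamma$, join $p$ to $\gamma p$ by a minimizing geodesic and subdivide it by points $p = x_0, x_1, \dots, x_n = \gamma p$ with $d(x_i,x_{i+1}) \le R$ and $n \le d(p,\gamma p)/R + 1$. For each $i$ choose $\gamma_i \in \Gamma$ with $x_i \in \gamma_i \bar B(p,R)$, taking $\gamma_0 = e$ and $\gamma_n = \gamma$. A short computation with the triangle inequality and the isometry property gives $d(\gamma_i^{-1}x_{i+1},p) \le 2R$, so $\gamma_i^{-1}\gamma_{i+1}$ carries the point $\gamma_{i+1}^{-1}x_{i+1} \in \bar B(p,R) \subseteq \bar B(p,3R)$ into $\bar B(p,3R)$; hence $\gamma_i^{-1}\gamma_{i+1} \in S \cup \{e\}$. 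Telescoping, $\gamma = (\gamma_0^{-1}\gamma_1)(\gamma_1^{-1}\gamma_2)\cdots(\gamma_{n-1}^{-1}\gamma_n)$ is a word of length $\le n$ in $S$, so $\Gamma = \langle S\rangle$ is finitely generated and $|\gamma|_S \le R^{-1}d(p,\gamma p) + 1$.

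For the opposite estimate, set $\mu = \max_{s\in S} d(p,s\,p)$; writing $\gamma = s_1\cdots s_k$ with $k = |\gamma|_S$ and applying the triangle inequality along the orbit points $p, s_1 p, s_1 s_2 p, \dots, \gamma p$ gives $d(p,\gamma p) \le \mu\,|\gamma|_S$, since $\Gamma$ acts isometrically. Applying both inequalities to $\gamma^{-1}\gamma'$ and using $d(\gamma p,\gamma' p) = d(p,\gamma^{-1}\gamma' p)$ and $d_S(\gamma,\gamma') = |\gamma^{-1}\gamma'|_S$ shows the orbit map $\gamma \mapsto \gamma p$ is an $(L,C)$-quasi-isometric embedding of $(\Gamma,d_S)$ into $\tilde{M}$ for a suitable $L,C$. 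It is coarsely surjective, since by the choice of $R$ every point of $\tilde{M}$ lies within distance $R$ of the orbit $\Gamma p$; and a coarsely surjective quasi-isometric embedding admits a quasi-inverse, hence is a quasi-isometry. Therefore $\tilde{M}$ is quasi-isometric to $\pi_1(M)$ with the word metric $d_S$.

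The step I expect to require the most care --- the main obstacle --- is the generation/word-length argument: one must fix the relation between the covering radius $R$ and the ball radius appearing in $S$ (here $3R$) so that the overlap of consecutive translates genuinely forces $\gamma_i^{-1}\gamma_{i+1} \in S$, and one must invoke proper discontinuity together with the properness of $\tilde{M}$ to guarantee that $S$ is finite. Everything else reduces to the triangle inequality and the fact that deck transformations are isometries.
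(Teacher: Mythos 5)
Your argument is the standard and correct proof of the \v{S}varc--Milnor lemma: the choice of $R$ with $\Gamma\cdot\bar B(p,R)=\tilde M$, the finite generating set $S$ of elements displacing $\bar B(p,3R)$ into itself, the subdivision of a minimizing geodesic to bound the word length by $d(p,\gamma p)/R+1$, and the reverse triangle-inequality estimate are exactly the classical argument. The paper states this theorem without proof, citing Milnor and Schwarz, so there is nothing to diverge from; your sketch fills in that proof correctly.
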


Milnor-Schwarz's theorem is more general than this and has other interesting applications, for example, if $G_1$ is a finite index subgroup of a finitely generated group $G$ then $G_1$ is also finitely generated; moreover the groups $G$ and $G_1$ are quasi-isometric. In particular, all these invariants are preserved by quasi-isometry.

The next result classifies all the subgroups of a hyperbolic group (the Cayley graph is a hyperbolic  metric space in the sense of Gromov), namely,
\begin{theorem}[Tits alternative for hyperbolic groups]
Let $\Gamma$ be a subgroup of a hyperbolic group. We have one of the following three cases:
\begin{enumerate}
\item $\Gamma$ is finite;
\item $\Gamma$ contains an infinite cyclic subgroup of finite index;
\item $\Gamma$ contains a non-abelian free subgroup.
\end{enumerate}
\end{theorem}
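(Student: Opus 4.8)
The plan is to use the action of a hyperbolic group $G$ on its Gromov boundary $\partial G$ together with a Ping-Pong argument. Fix a finite generating set, so that the Cayley graph $X$ of $G$ is a proper $\delta$-hyperbolic space on which $G$ acts geometrically, and recall the standard structure theory of hyperbolic groups that I would quote rather than reprove: (a) every element of infinite order $g\in G$ acts on $\partial G$ with exactly two fixed points $g^{+},g^{-}$, with \emph{north--south dynamics} (for every compact $K\subset\partial G\setminus\{g^{-}\}$ the iterates $g^{n}K$ converge to $g^{+}$, and symmetrically for $g^{-n}$); (b) the stabilizer in $G$ of an unordered pair $\{g^{+},g^{-}\}$ is virtually cyclic (the maximal elementary subgroup containing $g$); (c) a torsion subgroup of a hyperbolic group is finite, so every infinite subgroup $\Gamma\le G$ contains an element of infinite order; and (d) if $\Gamma\le G$ is infinite, then the set $\Lambda(\Gamma):=\overline{\Gamma x_{0}}\cap\partial G$ (the limit set, independent of $x_{0}\in X$) is a nonempty closed $\Gamma$-invariant subset.

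First I would reduce the whole statement to a trichotomy on the cardinality of $\Lambda(\Gamma)$. If $\Gamma$ is finite we are in case (1) and $\Lambda(\Gamma)=\emptyset$; conversely an infinite $\Gamma$ has unbounded orbit in the proper space $X$, so $\Lambda(\Gamma)\ne\emptyset$. Assume then $\Gamma$ infinite. By (c) it contains an element $g$ of infinite order, and since $g^{\pm n}x_{0}\to g^{\pm}$ both fixed points $g^{+},g^{-}$ lie in $\Lambda(\Gamma)$; as $g^{+}\ne g^{-}$ this gives $|\Lambda(\Gamma)|\ge 2$ (and in particular the case $|\Lambda(\Gamma)|=1$ does not occur). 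So it remains to treat $|\Lambda(\Gamma)|=2$ and $|\Lambda(\Gamma)|\ge 3$.

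Suppose $\Lambda(\Gamma)=\{\xi_{+},\xi_{-}\}$ has exactly two points. Then $\{g^{+},g^{-}\}=\Lambda(\Gamma)$, and $\Gamma$ permutes $\{\xi_{+},\xi_{-}\}$, so the subgroup $\Gamma_{0}\le\Gamma$ fixing each of $\xi_{+},\xi_{-}$ has index at most $2$. Since $\Gamma_{0}$ lies in the stabilizer of the pair $\{g^{+},g^{-}\}$, which is virtually cyclic by (b), $\Gamma_{0}$ is virtually cyclic, hence so is $\Gamma$; being infinite, $\Gamma$ contains an infinite cyclic subgroup of finite index, which is case (2). Now suppose $|\Lambda(\Gamma)|\ge 3$. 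Then $\Gamma$ cannot stabilize $\{g^{+},g^{-}\}$ setwise (otherwise, by the previous paragraph's argument, $\Gamma$ would be virtually cyclic and $|\Lambda(\Gamma)|=2$), so some $\gamma\in\Gamma$ moves this pair; using that fixed-point pairs of loxodromic elements of $\Gamma$ are plentiful in $\Lambda(\Gamma)$ when $|\Lambda(\Gamma)|\ge 3$, I would produce a second infinite-order element $h\in\Gamma$ (for instance a suitable power of a conjugate $\gamma g^{m}\gamma^{-1}$, adjusted so that the four points are distinct) with $\{h^{+},h^{-}\}\cap\{g^{+},g^{-}\}=\emptyset$. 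Replacing $g,h$ by high powers $g^{N},h^{N}$ and taking small disjoint attracting/repelling neighbourhoods of the four fixed points in $\partial G$, the north--south dynamics (a) make the hypotheses of the Ping-Pong Lemma hold, so $\langle g^{N},h^{N}\rangle$ is free of rank $2$: this is case (3).

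The main obstacle is the final case: one must genuinely arrange \emph{independent} loxodromic elements (disjoint fixed-point pairs), which rests on the fact that when $|\Lambda(\Gamma)|\ge 3$ the limit set is infinite (indeed perfect) so that $\Gamma$ preserves no finite subset of $\partial G$ of size $\le 2$, and one must then choose the powers $N$ large enough that the attracting and repelling neighbourhoods actually separate, so that the ping-pong hypotheses are verified rather than merely plausible. The preliminary inputs (a)--(c) are classical for hyperbolic groups and I would cite them from the literature (e.g.\ Gromov, Ghys--de la Harpe, Bridson--Haefliger) instead of reproving them here.
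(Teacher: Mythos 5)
The paper does not prove this theorem at all: it simply cites Ghys--de la Harpe, p.~157. Your sketch is, in substance, the standard argument from that very source (classification of subgroups of a hyperbolic group via the limit set $\Lambda(\Gamma)\subset\partial G$, the trichotomy $|\Lambda(\Gamma)|=0$, $=2$, $\geq 3$, and ping-pong with north--south dynamics in the last case), and it is correct modulo the classical facts (a)--(d) you explicitly quote. Two small remarks. First, the heaviest of those inputs is (c), finiteness of torsion subgroups of a hyperbolic group: it is genuinely nontrivial and is exactly what rules out a fourth case, so it deserves the citation you give it. Second, the ``adjustment so that the four points are distinct'' is automatic: in a hyperbolic group two infinite-order elements whose fixed-point pairs share even one point must share both (they generate an elementary subgroup), so once $\gamma$ moves the pair $\{g^{+},g^{-}\}$ the conjugate $\gamma g\gamma^{-1}$ already has a fixed-point pair disjoint from $\{g^{+},g^{-}\}$, and ping-pong applies to suitable powers. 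With that observation your outline closes up into a complete proof along the lines the paper delegates to the literature.
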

\begin{proof}
See Ghys and de la Harpe \cite{Ghys}, page 157.
\end{proof}
\section{The fundamental group of compact negatively curved manifold}
Throughout this section, we shall assume that our manifold is compact and with negative sectional curvature. In particular, it follows that the sectional curvature is bounded by negative constants. 
\begin{theorem}\cite[Theorem 1]{Chen}\label{thm1}

Let $M$ be a compact negatively curved manifold. Then, any amenable subgroup of $\pi_1(M)$ is cyclic.

\end{theorem}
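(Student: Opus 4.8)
The statement asserts that any amenable subgroup $\Gamma \leq \pi_1(M)$ of a compact negatively curved manifold $M$ is cyclic. The natural route is to combine the Tits alternative for hyperbolic groups with basic facts about amenability. First I would note that $\pi_1(M)$ is hyperbolic: since $M$ is compact with sectional curvature bounded above by a negative constant, its universal cover $\tilde M$ is a Hadamard manifold which is Gromov hyperbolic, and by the Milnor--Schwarz theorem $\pi_1(M)$ is quasi-isometric to $\tilde M$; since Gromov hyperbolicity is a quasi-isometric invariant (as recalled in the Preliminaries), $\pi_1(M)$ is a hyperbolic group. Hence so is $\Gamma$, being a subgroup, or at least $\Gamma$ falls under the scope of the Tits alternative for subgroups of hyperbolic groups stated above.

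Next I would apply the Tits alternative to $\Gamma$. Case (3) — $\Gamma$ contains a non-abelian free subgroup — is impossible: a non-abelian free group is not amenable (it contains a paradoxical decomposition / violates the Følner condition), and amenability passes to subgroups, so an amenable $\Gamma$ cannot contain $F_2$. This leaves cases (1) and (2): either $\Gamma$ is finite, or $\Gamma$ contains an infinite cyclic subgroup of finite index. In the first case, $\Gamma$ finite forces $\Gamma$ trivial, because $\pi_1(M)$ is torsion-free: $\tilde M$ is diffeomorphic to $\RR^n$ by Hadamard's theorem, so $M$ is a $K(\pi_1(M),1)$, and a nontrivial finite group cannot have a finite-dimensional $K(G,1)$ (its group cohomology is nonzero in infinitely many degrees). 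So $\Gamma = \{1\}$, which is cyclic.

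In the remaining case $\Gamma$ contains $\ZZ$ as a finite-index subgroup, hence $\Gamma$ is virtually infinite cyclic, in particular finitely generated and two-ended. I would then invoke the Preissmann--Byers-type rigidity already cited in the introduction: in a compact negatively curved manifold every virtually cyclic (indeed every abelian or solvable) subgroup of $\pi_1(M)$ is infinite cyclic. Concretely, a finite-index $\ZZ$ inside $\Gamma$ is generated by some $\gamma$; its centralizer/normalizer structure on the axis of $\gamma$ in $\tilde M$ (the unique geodesic preserved by $\gamma$, using that $\gamma$ acts as an axial isometry on the Hadamard manifold $\tilde M$ with pinched negative curvature) forces $\Gamma$ to act on that axis, and a torsion-free group acting properly and cocompactly by isometries on a line is infinite cyclic. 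Therefore $\Gamma \cong \ZZ$, and in all cases $\Gamma$ is cyclic.

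\textbf{Main obstacle.} The delicate point is the last step: ruling out the possibility that $\Gamma$ is a nontrivial finite extension of $\ZZ$ that is \emph{not} itself $\ZZ$ (e.g.\ the infinite dihedral group $\ZZ \rtimes \ZZ/2$, which is virtually cyclic but not cyclic). This cannot be settled by the Tits alternative or amenability alone; one genuinely needs the negative-curvature geometry — torsion-freeness kills the $\ZZ/2$ factor, and the dynamics of axial isometries on $\tilde M$ pin down the action — so the proof must lean on Preissmann's theorem (or a direct argument with the invariant geodesic) rather than on soft group theory.
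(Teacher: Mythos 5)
Your proposal follows essentially the same route as the paper's proof: Milnor--Schwarz makes $\pi_1(M)$ quasi-isometric to $\tilde M$ and hence hyperbolic, the Tits alternative for subgroups of hyperbolic groups is applied to the amenable subgroup, case (3) is excluded because an amenable group contains no non-abelian free subgroup, and case (1) is excluded by torsion-freeness. The only divergence is the last step, where the paper simply stops at ``virtually infinite cyclic'' while you correctly flag the infinite-dihedral obstruction; note, though, that this is settled purely algebraically rather than by Preissmann-type axis dynamics: every infinite virtually cyclic group surjects with finite kernel onto $\mathbb{Z}$ or onto the infinite dihedral group, and since $\pi_1(M)$ is torsion-free the kernel is trivial and the dihedral case is impossible, so the subgroup is trivial or $\mathbb{Z}$.
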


\begin{proof}
 Since $M$ is compact, we can apply the Milnor-Schwarz lemma. Then the fundamental group is quasi-isometric to the universal cover of $M$, which implies it is a hyperbolic group. By the Tits alternative for hyperbolic groups the subgroup is either finite, virtually infinite cyclic, or contains a non-abelian free group. Since every amenable group does not contain a non-abelian free group and $\pi(M)$ is torsion free, we have that any amenable subgroup is virtually infinite cyclic group.
\end{proof}

It is a well known result; that the number of ends $e(G)$ of a finitely generated group $G$ is $0, 1, 2$ or $\infty$; moreover  $e(G) = 0$ if and only if $G$ is finite, while $e(G) = 2$ if and only if $G$ is a virtually infinite cyclic group. Let $H$ be a subgroup of $\pi(M);$ since $\pi(M)$ is torsion free, $e(H) \neq 0;$ if $H$ is amenable finitely generated then $e(H) \neq \infty;$ then $e(H)\in \{1,2\}$. Is it possible to prove that $e(H)$ is necessarily $2$? Note that the only case one still needs to prove is $e(H)\neq 1.$ Yau \cite{MR0283726} prove that any solvable subgroup of $\pi(M)$ is finitely generated. If one can prove this directly for amenable groups then the discussion above can be used to provide another proof of theorem \ref{thm1}.

\begin{cor}[Beyers and Preissmman] Every abelian or solvable subgroup of $\pi(M)$ is infinite cyclic.

\end{cor}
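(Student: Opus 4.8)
The plan is to read this off directly from Theorem \ref{thm1}. The only ingredient needed beyond that theorem is the elementary fact that every abelian group is amenable and that amenability is preserved under extensions; since a solvable group is obtained from abelian groups by finitely many extensions, every solvable group is amenable as well. Thus, if $H$ is an abelian or solvable subgroup of $\pi_1(M)$, then $H$ is an amenable subgroup of $\pi_1(M)$, and Theorem \ref{thm1} applies to give that $H$ is cyclic.

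It remains to upgrade ``cyclic'' to ``infinite cyclic'', i.e. to exclude a nontrivial finite cyclic $H$. For this I would use the Hadamard theorem, as already recalled in the introduction: the universal cover $\tilde M$ is diffeomorphic to $\mathbb{R}^n$, so $M$ is a finite-dimensional $K(\pi_1(M),1)$ and hence $\pi_1(M)$ is torsion-free (a nontrivial finite cyclic group has nonzero cohomology in infinitely many degrees, which is incompatible with a finite-dimensional classifying space). Consequently a cyclic subgroup of $\pi_1(M)$ is either trivial or infinite cyclic, which is exactly the assertion. One small point internal to the proof of Theorem \ref{thm1} also deserves a line: case (2) of the Tits alternative only gives that $H$ contains $\mathbb{Z}$ with finite index, and a torsion-free group containing $\mathbb{Z}$ as a finite-index subgroup is necessarily $\mathbb{Z}$ itself — the only alternative, the infinite dihedral group $\mathbb{Z}/2 \ast \mathbb{Z}/2$, has torsion.

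I do not expect a real obstacle: all the work is already contained in Theorem \ref{thm1} together with torsion-freeness. If one instead wanted to bypass Theorem \ref{thm1} and argue through the number of ends, as suggested in the paragraph preceding the corollary, then the main difficulty would be precisely the point flagged there, namely ruling out $e(H)=1$; for solvable $H$ this can be handled using Yau's finite generation result \cite{MR0283726}, but the amenable case in general would require an additional argument.
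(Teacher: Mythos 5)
Your proof is correct and follows exactly the route the paper intends: the corollary is presented as an immediate consequence of Theorem \ref{thm1} (abelian and solvable groups are amenable), with torsion-freeness of $\pi_1(M)$ from the Hadamard theorem upgrading ``cyclic'' to ``infinite cyclic''. Your added remark that a torsion-free virtually-$\mathbb{Z}$ group is $\mathbb{Z}$ itself is a worthwhile clarification of a step the paper leaves implicit in the proof of Theorem \ref{thm1}.
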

Preissmman \cite{Preissmann} also proved that $\pi_1(M)$ is not abelian. This result admits the following (quite hard) generalization, see \cite{MR0256305}. To be precise:
\begin{theorem}[Avez]\label{Avez}
Let $M$ be a connected compact Riemannian manifold with non-positive curvature everywhere, which is not a flat manifold; then the fundamental group of $M$ is not amenable. 

\end{theorem}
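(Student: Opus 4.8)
The plan is to prove the contrapositive: if $\pi_1(M)$ is amenable, then $M$ is flat. Write $\Gamma=\pi_1(M)$ and let $X=\tilde M$ be the universal cover with the pulled-back metric. By the Hadamard--Cartan theorem $X$ is a Hadamard manifold --- complete, simply connected, non-positively curved --- hence a proper $\mathrm{CAT}(0)$ space diffeomorphic to $\mathbb{R}^n$ with $n=\dim M$, and $\Gamma$ acts on $X$ freely, properly discontinuously and cocompactly by isometries; by Milnor--Schwarz $\Gamma$ is finitely generated, and since $X$ is contractible $\Gamma$ is torsion free. When $M$ is \emph{negatively} curved this is already easy: $\Gamma$ is then a torsion-free hyperbolic group which (as $M$ is not flat, so $\dim M\ge 2$) is infinite and, by comparison of volume growth --- a Hadamard manifold of dimension $\ge 2$ has at least quadratic volume growth, while a virtually cyclic group has linear growth --- is not virtually cyclic, so the Tits alternative for hyperbolic groups forces $\Gamma$ to contain a nonabelian free subgroup and hence to be non-amenable. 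The point of Avez's theorem is the merely non-positively curved case, where $X$ need not be Gromov hyperbolic.

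For that case I would invoke the structure theory of amenable isometry groups (Adams--Ballmann): an amenable group of isometries of a proper $\mathrm{CAT}(0)$ space either fixes a point of the visual boundary $\partial_\infty X$, or leaves invariant a flat $F\subseteq X$, that is, a closed convex subset isometric to some Euclidean $\mathbb{R}^k$. Assume $\Gamma$ is amenable and split accordingly.

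Suppose first $\Gamma$ preserves a flat $F\cong\mathbb{R}^k$. A closed $\Gamma$-invariant subset of a cocompact $\Gamma$-space is again cocompact (if $\Gamma K=X$ with $K$ compact then $\Gamma(F\cap K)=F$), so $\Gamma$ acts freely, properly and cocompactly by isometries on $F$; by Bieberbach's theorem $\Gamma$ is virtually $\mathbb{Z}^k$. On the other hand $\Gamma$ acts freely, properly and cocompactly on the contractible $n$-manifold $X$, so for a finite-index subgroup $\mathbb{Z}^k\le\Gamma$ the quotient $X/\mathbb{Z}^k$ is a closed aspherical $n$-manifold homotopy equivalent to the $k$-torus, and Poincar\'e duality forces $k=n$. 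But then $F$ is a complete $n$-dimensional totally geodesic submanifold of the connected $n$-manifold $X$, hence open and closed, so $F=X$; thus $X$ is flat and $M=X/\Gamma$ is a flat (Bieberbach) manifold, contradicting the hypothesis.

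The main obstacle is the other case, where $\Gamma$ fixes a point $\xi\in\partial_\infty X$; this genuinely occurs (already $\mathbb{Z}^2$ acting on $\mathbb{R}^2$ fixes every point at infinity), so it cannot be excluded outright. Here I would use the Busemann function $b=b_\xi\colon X\to\mathbb{R}$: since each $\gamma\in\Gamma$ fixes $\xi$, the function $b\circ\gamma^{-1}$ is again a Busemann function centered at $\xi$, so $b\circ\gamma^{-1}-b$ is a constant $c(\gamma)$, and $c\colon\Gamma\to\mathbb{R}$ is a homomorphism. If $c\equiv 0$ then $b$ is $\Gamma$-invariant and descends to a continuous function on the compact manifold $M$, which is absurd since $b$ is unbounded below (it decreases at unit speed along the ray to $\xi$). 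If $c\not\equiv 0$ then $c(\Gamma)$ is a nontrivial finitely generated subgroup of $\mathbb{R}$, so $\Gamma$ surjects onto $\mathbb{Z}$ with kernel $N$, and $N$ preserves every horoball centered at $\xi$; one is then forced into a splitting/induction argument --- showing that the $N$-action on a horosphere, or the $\mathbb{R}$-action on $X$ toward $\xi$, lets one split off a Euclidean direction and conclude by induction on $\dim M$, landing back in the flat case. Making this rigorous, and in particular using cocompactness in an essential way, is the hard part. Alternatively one can bypass the entire case analysis by appealing to the work of Gromoll--Wolf and Lawson--Yau (supplemented in full generality by the rank rigidity theorem of Ballmann and Burns--Spatzier): a compact non-positively curved manifold whose fundamental group contains no nonabelian free subgroup is flat; since amenable groups contain no such subgroup, this settles the theorem at once.
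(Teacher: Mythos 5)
The first thing to say is that the paper does not actually prove this statement: it cites Avez's original article and only sketches his method (a lower bound on the volume of geodesic balls in the universal cover, obtained from comparison arguments for the associated Riccati equation together with Birkhoff's ergodic theorem applied to the geodesic flow on $T_1M$); the only case proved in the text is the strictly negatively curved one (Theorem~5), by essentially the same ends/Tits-alternative argument you give in your opening paragraph. So for the theorem as stated -- non-positive curvature -- you are on your own, and your main line of argument has a genuine, and indeed self-acknowledged, gap. In the Adams--Ballmann dichotomy, the subcases you do treat are fine: the invariant-flat case is handled correctly (cocompactness of the restricted action, Bieberbach, and Poincar\'e duality forcing $k=n$), and so is the subcase where $\Gamma$ fixes $\xi\in\partial_\infty X$ with trivial Busemann homomorphism ($b_\xi$ would descend to the compact $M$ yet be unbounded). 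But the remaining subcase, where $c\colon\Gamma\to\mathbb{R}$ is nontrivial, is precisely where the whole difficulty of Avez's theorem is concentrated, and ``one is then forced into a splitting/induction argument \dots\ making this rigorous is the hard part'' is a description of a proof, not a proof. Nothing you have written rules out, for instance, that $X$ has no Euclidean de Rham factor at all while $\Gamma$ still fixes a boundary point with $c\neq 0$; extracting a genuine Euclidean splitting from a fixed point at infinity plus cocompactness is a real theorem (flat-strip/Clifford-translation type arguments), not a routine induction.

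Your fallback -- citing Gromoll--Wolf and Lawson--Yau supplemented by rank rigidity -- does settle the statement, and in fact proves the stronger assertion that a compact non-positively curved manifold whose fundamental group contains no nonabelian free subgroup is flat. But be aware of what this costs: Gromoll--Wolf and Lawson--Yau by themselves only cover \emph{solvable} fundamental groups, so passing from ``solvable'' to ``amenable'' (equivalently, to ``no free subgroup'') genuinely requires the rank rigidity theorem of Ballmann and Burns--Spatzier, one of the deepest results in the field and proved some fifteen years after Avez's theorem. As a citation-proof it is legitimate, but the appeal to authority occurs exactly at the step that constitutes the entire content of the theorem, whereas Avez's own argument is a comparatively elementary and self-contained piece of comparison geometry and ergodic theory. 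If you want a complete written proof rather than a reduction to harder theorems, you either need to carry out the horospherical splitting argument in the $c\neq 0$ case in detail, or follow Avez's volume-growth route.
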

The idea used to prove this result was bounding from below the volume of geodesic balls centered at a point in the universal covering. In order to achieve this the author uses comparison techniques on an associated Riccati equation and Birkhoff's theorem (on space and time means) applied to the geodesic flow on $T_1M.$

Here we prove an intermediate version of this theorem:

\begin{theorem}

Let $M$ be a compact negatively curved manifold. Then $\pi_1(M)$ is not amenable. Moreover, $\pi_1(M)$ has no finite index amenable subgroup.

\end{theorem}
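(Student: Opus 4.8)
The plan is to deduce this statement from the results already assembled in the excerpt, rather than to redo Avez's volume-growth argument. The key structural facts available are: (i) by Milnor--Schwarz, $\pi_1(M)$ is finitely generated and quasi-isometric to $\tilde M$, hence it is a hyperbolic group; (ii) by the Hadamard theorem $\pi_1(M)$ is torsion free; and (iii) by Milnor's theorem $\pi_1(M)$ has exponential growth. From these I would argue as follows. First, by the Tits alternative for hyperbolic groups, $\pi_1(M)$ falls into one of three cases. It is not finite (it has exponential growth, or more elementarily $\tilde M$ is noncompact). It is not virtually infinite cyclic: a virtually infinite cyclic group has polynomial (in fact linear) growth, contradicting Milnor's exponential growth result; alternatively, being torsion free and virtually $\mathbb Z$ it would be infinite cyclic, contradicting Preissmann's theorem that $\pi_1(M)$ is not abelian. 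Hence $\pi_1(M)$ contains a non-abelian free subgroup, and therefore $\pi_1(M)$ is \emph{not} amenable, since no group containing a non-abelian free subgroup can be amenable (a non-abelian free group is non-amenable, and amenability passes to subgroups).

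For the ``moreover'' clause, let $H \leq \pi_1(M)$ be a finite-index subgroup and suppose for contradiction that $H$ is amenable. Since $H$ has finite index in the finitely generated group $\pi_1(M)$, Milnor--Schwarz (in the stronger form quoted after it in the excerpt) gives that $H$ is finitely generated and quasi-isometric to $\pi_1(M)$, hence $H$ is itself a hyperbolic group and is torsion free. Now apply Theorem~\ref{thm1}: an amenable subgroup of $\pi_1(M)$ is cyclic, so $H$ would be infinite cyclic. But then $\pi_1(M)$ would be virtually infinite cyclic, hence (being torsion free) infinite cyclic, again contradicting Preissmann's theorem; or, once more, contradicting exponential growth. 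Either contradiction finishes the argument.

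I would write the proof in roughly that order: invoke Milnor--Schwarz and Hadamard to set up hyperbolicity and torsion-freeness, run the Tits alternative to extract a non-abelian free subgroup, conclude non-amenability, and then handle finite-index subgroups by the same Tits-alternative/Preissmann mechanism together with Theorem~\ref{thm1}. The only real subtlety --- the ``main obstacle'', such as it is --- is making sure the exclusion of the virtually-infinite-cyclic case is clean: one should cite either Milnor's exponential growth theorem (a virtually $\mathbb Z$ group has linear growth) or Preissmann's non-abelianness theorem (a torsion-free virtually $\mathbb Z$ group is $\mathbb Z$), and both are already available in the excerpt, so no genuine difficulty remains. Everything else is a routine assembly of the quoted theorems.
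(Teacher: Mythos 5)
Your argument is correct, and the setup (Milnor--Schwarz, hyperbolicity of $\pi_1(M)$, torsion-freeness via Hadamard, Tits alternative) coincides with the paper's; but the way you derive the final contradiction is genuinely different. The paper's proof uses the number of ends as the decisive quasi-isometry invariant: Hadamard plus Milnor--Schwarz make $\pi_1(M)$ one-ended, while an amenable (or finite-index amenable) subgroup forces it to be virtually $\mathbb{Z}$ and hence two-ended. You instead run the Tits alternative on $\pi_1(M)$ itself, exclude the finite and virtually-$\mathbb{Z}$ cases by Milnor's exponential-growth theorem (or by Preissmann plus the fact that a torsion-free virtually-$\mathbb{Z}$ group is $\mathbb{Z}$), and extract a non-abelian free subgroup, which kills amenability since amenability passes to subgroups. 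This is essentially the alternative route the paper itself flags right after the theorem, namely deducing the result from Eberlein's existence of a free subgroup --- except that you reprove Eberlein's conclusion via the Tits alternative rather than citing it. What the paper's ends-based argument buys is a slightly slicker treatment of the ``moreover'' clause, since the number of ends is automatically shared by finite-index subgroups; what your argument buys is independence from the theory of ends, at the cost of invoking exponential growth (or Preissmann) to rule out the two-ended case. Your handling of the finite-index case via Theorem~\ref{thm1} is sound; the only cosmetic caveat is that Theorem~\ref{thm1} as stated says ``cyclic'' while its proof only yields ``virtually infinite cyclic,'' but either conclusion makes $\pi_1(M)$ virtually $\mathbb{Z}$ and your contradiction goes through.
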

\begin{proof}
By Hadamard theorem the universal cover of $M$ has one end and thus $\pi_1(M)$ is a one ended group by Milnor-Schwarz. On the other hand, if $\pi_1(M)$ is amenable, by Tits alternative it has two ends and this is a contradiction.
\end{proof}
Observe that the previous result can also be obtained as an immediate consequence of Eberlein's theorem \cite{MR0400250} since amenable groups do not contain a nontrivial free subgroup.

For the proof of Theorem \ref{thm7} we need a theorem due to Swan \cite{Swan} which generalizes a theorem of Stallings \cite{Stallings}.
\begin{theorem}[Swan] 
A torsion free group with a free subgroup of finite index is a free group.
\end{theorem}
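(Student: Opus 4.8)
The plan is to prove first the finitely generated case --- which amounts to Stallings' theorem --- and then to reduce the general statement to it by a cohomological argument; this second reduction is exactly where Swan improves on Stallings, and I expect it to be the hardest step.

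First suppose $G$ is finitely generated, and let $F\leq G$ be free of finite index, so that $G$ is \emph{virtually free}; being torsion free, its only finite subgroup is trivial. Since $[G:F]<\infty$ we have $e(G)=e(F)$, and a nontrivial free group has $2$ ends when it is infinite cyclic and infinitely many otherwise; hence a torsion free virtually free group never has exactly one end, so $e(G)\in\{0,2,\infty\}$. If $e(G)=0$ then $G$ is finite, hence trivial; if $e(G)=2$ then $G$ is torsion free and virtually infinite cyclic, hence $G\cong\mathbb{Z}$; in both cases $G$ is free. If $e(G)=\infty$, then by Stallings' structure theorem $G$ splits over a finite --- hence trivial --- subgroup, that is, $G$ is a nontrivial free product. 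I would then iterate: by Grushko's theorem the rank is additive over free products, so the splitting process stops after finitely many steps and yields a Grushko decomposition
\[
G \;=\; A_1 \ast \cdots \ast A_k \ast F_r ,
\]
with $F_r$ free and each $A_i$ nontrivial, not infinite cyclic, and freely indecomposable. (Here, because all the edge groups occurring are trivial, no accessibility input beyond Grushko's theorem is needed.) Each $A_i$, being a subgroup of $G$, is again torsion free and virtually free; if $A_i$ were infinite it would have at least two ends, hence by the dichotomy above it would be infinite cyclic or a nontrivial free product, contradicting its choice. So every $A_i$ is trivial and $G=F_r$ is free.

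For the general case I would drop finite generation and pass to cohomological dimension. Free groups have cohomological dimension at most $1$ over $\mathbb{Z}$, hence so does $F$, and since $[G:F]<\infty$ this gives $\operatorname{vcd}_{\mathbb{Z}}(G)\leq 1$; as $G$ is torsion free, Serre's theorem yields $\operatorname{cd}_{\mathbb{Z}}(G)\leq 1$. It then remains to prove that a group of cohomological dimension at most $1$ is free, without assuming it finitely generated. Writing $G$ as the directed union of its finitely generated subgroups shows each of those is free by the first part, but a directed union of free groups need not be free (locally free groups need not be free), so a genuine argument is still required --- and this is the technical core of Swan's paper, and the step I expect to be the main obstacle: one shows that $\operatorname{cd}_{\mathbb{Z}}(G)\leq 1$ forces the augmentation ideal of $\mathbb{Z}G$ to be projective, and then, by a transfinite induction over a well ordered generating set, that a group with projective augmentation ideal is free. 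Granting this, $G$ is free and the proof is complete.
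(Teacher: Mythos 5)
The paper does not prove this statement at all: it is quoted as a known theorem with a reference to Swan's article, and it is used as a black box in the proof of Theorem \ref{thm7}. So there is no proof in the paper to compare yours against. Your outline is the standard one and, as far as it goes, it is correct: the finitely generated case via the count of ends, Stallings' structure theorem for groups with infinitely many ends, and Grushko's rank additivity is sound (each freely indecomposable factor is again torsion free and virtually free, hence trivial), and the reduction of the general case via $\operatorname{vcd}_{\mathbb{Z}}(G)\leq 1$ plus Serre's theorem on the cohomological dimension of torsion-free groups is exactly the route Swan takes. Be aware, though, of what you are importing: the argument rests on three substantial external theorems (Stallings' ends/structure theorem, Serre's theorem that $\operatorname{cd}=\operatorname{vcd}$ for torsion-free groups of finite $\operatorname{vcd}$, and the non-finitely-generated case of ``$\operatorname{cd}_{\mathbb{Z}}(G)\leq 1$ implies $G$ free''), and the last of these --- which you explicitly grant --- is the actual technical content of Swan's paper, so as a self-contained proof your text is an accurate road map rather than a complete argument. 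You correctly identify the pitfall that a locally free group need not be free, which is precisely why the naive reduction to finitely generated subgroups fails and why the projective-augmentation-ideal argument is needed. Also note that your first paragraph is logically redundant: the cohomological argument of the second paragraph covers the finitely generated case as well, though stating Stallings' case separately is good exposition.
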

In particular, the fundamental group of $M$ (compact, negatively curved) has no finite index free subgroup. 

Note that if $\pi_1(M) \in \mathcal{C}$ then $\pi_1(M)$ can be obtained by the following operations:
\begin{itemize}
\item Contain a finite index amenable subgroup.
\item Contain a subgroup with a free product of amenable subgroups of finite index.  
\end{itemize} 

\begin{theorem}\label{thm7}
Let $M$ be a compact negatively curved manifold. Then $\pi_1(M)$ does not belong to the class $\mathcal{C}$.
\end{theorem}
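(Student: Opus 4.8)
The plan is to argue by contradiction, exploiting the two structural features that the paper has carefully assembled: that $\pi_1(M)$ is torsion free and one-ended (equivalently, not virtually free and not virtually cyclic), and that the class $\mathcal{C}$ is generated from amenable groups by free products and finite extensions. Suppose $\pi_1(M) \in \mathcal{C}$. By Theorem \ref{thm1} every amenable subgroup of $\pi_1(M)$ is infinite cyclic, in particular finitely generated, so we never have to worry about wild amenable subgroups occurring inside $\pi_1(M)$.

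The core idea is an induction on the construction of an arbitrary group in $\mathcal{C}$, showing that any torsion-free one-ended group in $\mathcal{C}$ is impossible, or more precisely that any torsion-free group in $\mathcal{C}$ is virtually free. First I would record the base case: a torsion-free amenable group in $\mathcal{C}$ is cyclic by Theorem \ref{thm1} (applied within $\pi_1(M)$), hence trivially virtually free. For the inductive step there are two operations. If $G = A \ast B$ with $A, B \in \mathcal{C}$ and $G$ torsion free, then $A$ and $B$ are torsion free, so by induction each is virtually free; a free product of virtually free groups is virtually free (one can see this via Bass--Serre theory, or cite Karrass--Pietrowski--Solitar / Stallings), so $G$ is virtually free. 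If $G \in \mathcal{C}$ arises as a finite-index overgroup of some $G_0 \in \mathcal{C}$, pass to a finite-index subgroup of $G_0$ that is normal of finite index in $G$; being torsion free (as a subgroup of $\pi_1(M)$) and in $\mathcal{C}$ via $G_0$, by induction it is virtually free, whence $G$ is virtually free. Therefore $\pi_1(M)$ itself is virtually free, i.e. has a free subgroup of finite index.

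Now I invoke Swan's theorem, stated in the excerpt: a torsion-free group with a free subgroup of finite index is free. Hence $\pi_1(M)$ is free. But a nontrivial free group has infinitely many ends (or is $\mathbb{Z}$, which has two), whereas by the preceding theorem $\pi_1(M)$ is one-ended (Hadamard plus Milnor--Schwarz give that the universal cover has one end). This contradiction shows $\pi_1(M) \notin \mathcal{C}$. Alternatively, and more in the spirit of the remark preceding the statement, one can finish by observing directly that $\pi_1(M)$ has no finite-index free subgroup: by Swan such a subgroup would force $\pi_1(M)$ to be free, contradicting one-endedness (or Milnor's exponential-growth/ Preissmann results that rule out $\pi_1(M) \cong \mathbb{Z}$).

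The main obstacle is making the induction on membership in $\mathcal{C}$ rigorous, since $\mathcal{C}$ is defined as a smallest class and its elements do not come with canonical decompositions. The clean way is to define $\mathcal{C}' = \{G : G \text{ has a finite-index subgroup that is a finite free product of amenable groups}\}$ (allowing the trivial and one-factor cases), check that $\mathcal{C}'$ contains all amenable groups and is closed under free products and finite extensions — the free-product closure uses that a finite-index subgroup of $A \ast B$ is itself a free product of a free group with conjugates of subgroups of $A$ and $B$ (Kurosh subgroup theorem), and the finite-extension closure is immediate — so $\mathcal{C} \subseteq \mathcal{C}'$. Then if $\pi_1(M) \in \mathcal{C}'$, it has a finite-index subgroup $H$ that is a free product of amenable groups; each free factor, being a subgroup of the torsion-free $\pi_1(M)$ and amenable, is infinite cyclic or trivial by Theorem \ref{thm1}, so $H$ is free, and Swan finishes as above. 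The only delicate point is the Kurosh-type description of finite-index subgroups of free products, which is standard but worth citing carefully.
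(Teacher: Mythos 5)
Your proof is correct and follows essentially the same route as the paper: reduce membership in $\mathcal{C}$ to the existence of a finite-index subgroup that is a free product of amenable groups, use Theorem \ref{thm1} to force those factors to be infinite cyclic inside the torsion-free group $\pi_1(M)$, conclude that $\pi_1(M)$ would be virtually free, and contradict one-endedness via Swan's theorem and the count of ends. The one substantive improvement is that you actually justify the structural claim about $\mathcal{C}$ --- by introducing the auxiliary class $\mathcal{C}'$ and checking closure under free products with the Kurosh subgroup theorem --- whereas the paper merely asserts, without proof, that a group in $\mathcal{C}$ must contain a finite-index amenable subgroup or a finite-index free product of amenable subgroups; this is exactly the gap worth filling, and your $\mathcal{C}'$ device is the standard clean way to do it.
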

\begin{proof} 
In fact, we know that $\pi_1(M)$ has one end. Now, if $\pi (M)$ contains a finite index amenable subgroup, then it must be virtually $\mathbb{Z}$ and therefore it has $2$ ends and this is a contradiction. On the other hand, suppose $\pi_1 (M)$ contains a subgroup with a free product of amenable subgroups of finite index. In this case, $\pi_1(M)$ has finite index free subgroup and therefore it has infinite ends, which is another contradiction. Thus, $\pi_1(M)$ does not belong to the class $\mathcal{C}$.

\end{proof}

We now explain in more details how Gusevskij obtains Theorem 7 above. Let $\tilde{M}^n$, $n\geq 2$, be a complete, simply connected, Riemannian manifold with sectional curvature $K$ satisfying the condition $K \leq k <0$.  The main result of \cite{zbMATH03907299} is the following. 
\begin{theorem}[Gusevskij]
 Let $G$ be a discrete geometrically finite subgroup of $\Iso(\tilde{M})$ the group of isometries of $\tilde{M}$ that does not contain parabolic elements. Then there exist a $G$-equivariant homeomorphism of the completion of $G$ onto the limit set of $G$.
 \end{theorem}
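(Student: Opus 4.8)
The plan is to realize $G$ as a \emph{convex cocompact} group and then to invoke the standard fact that a quasi-isometry between Gromov hyperbolic spaces extends to a homeomorphism of their ideal boundaries. First I would pin down the terms in the present variable-curvature setting: ``geometrically finite'' is taken in the sense of Bowditch, and since $G$ contains no parabolic elements it acts properly discontinuously and cocompactly, by isometries, on the convex hull $C := \mathrm{Hull}(\Lambda(G)) \subseteq \tilde{M}$ of its limit set $\Lambda(G) \subseteq \partial_\infty \tilde{M}$; the absence of cusps is exactly what forces the convex core $C/G$ to be compact rather than merely of finite volume. By ``the completion of $G$'' I understand the ideal boundary $\partial_\infty G$ of $G$ viewed as a finitely generated group with a word metric (a quasi-isometry invariant, hence intrinsic), and the target is $\Lambda(G)$ with the topology induced from $\partial_\infty \tilde{M}$.

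Next I would check that $C$ is a closed, geodesically convex subset of the Hadamard manifold $\tilde{M}$, so that, since $K \le k < 0$, the space $C$ with its induced length metric is a complete, proper, geodesic $\mathrm{CAT}(k)$ space, in particular Gromov hyperbolic, with ideal boundary $\partial_\infty C = \Lambda(G)$. (The last equality holds because $G\cdot x_0 \subseteq C$ while, by cocompactness, $C$ lies within bounded Hausdorff distance of the orbit $G\cdot x_0$, whence $\overline{C}\cap\partial_\infty\tilde{M} = \overline{G\cdot x_0}\cap\partial_\infty\tilde{M} = \Lambda(G)$.) Applying the Milnor--Schwarz lemma to the proper cocompact isometric action of $G$ on $C$, the orbit map $o : G \to C$, $g \mapsto g\cdot x_0$, is a $G$-equivariant quasi-isometry; in particular $G$ is a hyperbolic group, so $\partial_\infty G$ is defined and ``the completion of $G$'' makes sense.

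Finally, since $o$ is a quasi-isometry between proper geodesic Gromov hyperbolic spaces, it extends to a homeomorphism $\partial o : \partial_\infty G \to \partial_\infty C = \Lambda(G)$, and because $o$ intertwines the left translation action of $G$ on itself with the action of $G$ on $C$, its boundary extension $\partial o$ is $G$-equivariant --- which is the assertion. (If ``completion of $G$'' is meant instead as the compactification $G \cup \partial_\infty G$, one extends $o$ to a homeomorphism onto $\overline{G\cdot x_0} = (G\cdot x_0)\cup\Lambda(G)$ and restricts.) The step I expect to be the main obstacle is the first one: justifying carefully that geometric finiteness \emph{together with the absence of parabolics} yields the cocompact action on $\mathrm{Hull}(\Lambda(G))$ in variable negative curvature --- this rests on Bowditch's equivalences between the competing definitions of geometric finiteness --- and confirming that $\mathrm{Hull}(\Lambda(G))$, with the induced metric, really is a proper geodesic hyperbolic space whose ideal boundary is exactly $\Lambda(G)$. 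Once these geometric facts are secured, the remainder is the routine Milnor--Schwarz-plus-boundary-extension argument.
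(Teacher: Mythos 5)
The paper itself does not prove this theorem --- it only states Gusevskij's result and sketches his strategy --- but there is still a genuine mismatch between what you prove and what the theorem asserts. In this paper (and in Gusevskij's), ``the completion of $G$'' is \emph{not} the Gromov boundary $\partial_\infty G$: it is the Floyd-type space $\overline{G(S,f)}=\overline{Cay(G,S)}\setminus Cay(G,S)$, where $Cay(G,S)$ carries the rescaled metric $d_f$ in which an edge with endpoints at word-length about $n$ has length about $f(n)$ (e.g.\ $f(n)=n^{-p}$, $p>1$), and the overline denotes Cauchy completion. Your argument --- convex cocompactness, Milnor--Schwarz applied to the action on $\mathrm{Hull}(\Lambda(G))$, and the boundary extension of a quasi-isometry --- correctly produces a $G$-equivariant homeomorphism $\partial_\infty G\to\Lambda(G)$, but it never engages the metric $d_f$ or its completion. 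To convert your conclusion into the stated one you would additionally need a $G$-equivariant homeomorphism $\overline{G(S,f)}\cong\partial_\infty G$; for hyperbolic groups and suitable scaling functions this is true but is itself a nontrivial theorem (in the spirit of Floyd, Gromov, and Gerasimov), and it is not a formal consequence of quasi-isometry invariance: $(Cay(G,S),d_f)$ is a bounded metric space, not quasi-isometric to the word metric, so none of the boundary-extension machinery you invoke applies to it directly.

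By contrast, the route sketched in the paper attacks the completion head-on: Gusevskij chooses a special generating set $S_0$, defines an orbit-type map $\varphi\colon Cay(G,S_0)\to\tilde{M}$, extends it continuously to $\overline{Cay(G,S_0)}\to\tilde{M}\cup L(G)$, and shows that the restriction to $\overline{G(S_0,f)}$ is a $G$-equivariant homeomorphism onto $L(G)$. Your geometric preliminaries (cocompactness of the action on the convex hull in the absence of parabolics, and $\partial_\infty\mathrm{Hull}(\Lambda(G))=\Lambda(G)$) are the right inputs and would be needed in either approach, and your flagged concern about the equivalence of definitions of geometric finiteness is well placed --- note that the paper assumes only $K\le k<0$ with no lower curvature bound, where the Bowditch-style equivalences are more delicate. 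But as written, the proposal proves a neighbouring statement about the Gromov boundary rather than the statement about the completion that is actually claimed.
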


Some comments must be done regarding the last Theorem. For this, let $\tilde{M}(\infty)$ be the virtual boundary and consider the cone topology $\tau_{C}$ in $\bar{M}:=\tilde{M}\cup \tilde{M}(\infty).$ This topology was introduced by Eberlein and O'Neill in  \cite{MR0336648} and it is characterized by the following conditions: 1) the restriction of $\tau_C$ to $\tilde{M}$ coincides with the topology induced by the Riemannian distance $d$, 2) $\tilde{M}$ is an open everywhere dense subset of $\bar{M}$, 3) for any $p\in \tilde{M}$ and $x \in \tilde{M}(\infty)$ the family of {\it truncated cone} with vertex in $p$ and containing $x$ form a local basis of $\tau_C$ at $x.$ Here truncated cone a is given by $T(v,\varepsilon,r) = \{y \in \bar{M}:\sphericalangle_p(\gamma_v(\infty),y)< \varepsilon\}\setminus\{q\in\tilde{M}: d(p,q)\leq r\},$ where $\sphericalangle_p(\gamma_v(\infty),y)$ denotes the angle between the vectors $v \in T_p\tilde{M}$ and $\gamma'_{py}(0)$ ($\gamma_{py}$ is the unique geodesic joining $p$ and $y$).
 
 It is well known that the set $\bar{M}$ endowed with the cone topology $\tau_C$ is homeomorphic to a closed ball in $\mathbb{R}^n,$ and $\tilde{M}(\infty)$ is homeomorphic to the $(n-1)$-dimensional sphere $\mathbb{S}^{n-1}.$
 
 Let $G$ be a discrete subgroup of $\Iso(\tilde{M}).$  The set of limit poins in $\tau_C$ of the orbit $G\cdot p$ of an arbitrary point $p \in \tilde{M}$ is called {\it cone limit} set of $G$ at $p.$ This set does not depend on the choice of $p \in \tilde{M}$ and it is denoted by $L(G).$
 
 We denote by $Cay(G,S)$ the undirected Cayley graph of a finitely generated group $G$ with respect to a finite set of generators $S.$ Let $f:\mathbb{N}\to \mathbb{R}^{+}$ be a decreasing integrable function that satisfies the following condition: for any $k\in \mathbb{N},$ there are $m,n \in \mathbb{N}$ such that $mf(r)\leq f(kr)\leq n f(r)$ and $f(0)=f(1).$ On $Cay(G,S)$ we define a metric depending on $f$ as follows: the length of the edge in $Cay(G,S)$ with vertices $g,h\in G$ is equal to the minimum between $f(|g|)$ and $f(|h|)$ and the distance between any two vertices of $Cay(G,S)$ is equal to the minimum length among of the lengths of all open polygons joining these vertices, where $|\cdot|$ denote the word metric on $G$.
 
We denote by $\overline{Cay(G,S)}$ the Cauchy completion of $Cay(G,S)$, with the metric $d_f$. The {\it completion} of $G$ is the metric space $\overline{G(S,f)} = \overline{Cay(G,S)}\setminus Cay(G,S).$ For example, consider $G=\mathbb{Z}$ with generators $S=\{\pm 2,\pm 3\}$ and $f(n) = n^{-p}$, $p>1.$  The distance between the vertices $-3$ and $3$ is $3\cdot 2^{-p}.$ In this case, the completion of $\mathbb{Z}$ is a metric space formed by two points.

In order to prove Theorem 8, first of all, Gusevskij considers a special set $S_0$ of generators using that $G$ is a discrete geometrically finite subgroup of $\Iso(\tilde{M})$. Then he defines a map $\varphi: Cay(G,S_0)\to \tilde{M}$ and works to extend this map to $\bar{\varphi}:  \overline{Cay(G,S_0)}\to \tilde{M}\cup L(G).$ He concludes by showing that $\bar{\varphi},$ restricted to the completion of $ G $ gives a $G$-equivariant homeomorphism between $\overline{G(S_0,f)}$ and $L(G),$ for $f$ as in the example above.

Finally, the completion of a nontrivial free group is a totally disconnected metric space, while, by Theorem 8 the completion of the fundamental group of a compact Riemannian manifold $M^n$ of negative curvature is homeomorphic to $\mathbb{S}^{n-1}.$ Therefore, $\pi_1(M)$ is not free (see \cite[Theorem 3]{zbMATH03907299}.) By using this together with a result of Chen that states that if $\pi_1(M) \in \mathcal{C}$ then $\pi_1(M)$ must be a free group (see \cite[Theorem 3]{Chen}), Gusevskij obtains Theorem 7. 

\bigskip

\bibliographystyle{plain}
\bibliography{FirstLatexDoc_lib}

\vspace{1eM}
{\footnotesize
\begin{tabular}{lll}
	Alcides de Carvalho Júnior \\
	IMPA \\
	 Est. Dona Castorina 110 \\
	 Rio de Janeiro -- RJ, Brazil \\
	\textit{e-mail:} \texttt{alcidesj@impa.br} 
\end{tabular}}

\end{document}